\theoremstyle{thmstyleone}%
\newtheorem{theorem}{Theorem}
\theoremstyle{thmstyletwo}%
\newtheorem{remark}{Remark}%
\newtheorem{lemma}{Lemma}%
\theoremstyle{thmstylethree}%
\begin{document}

\title[Article Title]{Linear Filtering for Discrete Time Systems Driven by Fractional Noises}

\author[1]{\fnm{Yuecai} \sur{Han}}\email{hanyc@jlu.edu.cn}

\author*[1]{\fnm{Yuhang} \sur{Li}}\email{yuhangl22@mails.jlu.edu.cn}
\equalcont{These authors contributed equally to this work.}

\affil[1]{\orgdiv{School of Mathematics}, \orgname{Jilin University}, \orgaddress{ \city{Changchun}, \postcode{130012},  \state{Jilin Province}, \country{China}}}


\abstract{In this paper, we study the discrete time filtering problems for linear systems driven by fractional noises. The main difficulty comes from the non-Markovian of the noises. We construct the difference equation of the covariance process through the properties of the noises and transform the filtering problem to an optimal control problem.  We obtain the necessary condition that the coefficients of the optimal filter should satisfy and show there is no consistent optimal filter, which is a significant difference from the classical Kalman filter. Finally, a simple example is considered to illustrate the main results. Further more, our method could also deal with systems driven by any other colored noises, as long as the self-covariation function is known. }

\keywords{Linear filtering, fractional noises, optimal control, G$\hat{a}$teaux derivatives }



\maketitle

\section{Introduction}

Based on \cite{kalman1961neww} and related works, classical Kalman filter has been popularly investigated and applied in many areas due to the naturally arise of observation outliers, for example, see \cite{chui2017kalman,lawrence1971kalman,masreliez1975approximate,tang2020robot}. For research on theoretical aspects, stochastic non-linear filter is first studied by Kushner \cite{kushner1964dynamical,kushner1967dynamical} and Stratonovich \cite{stratonovich1960conditional}. Fujisaki et al. \cite{Fujisaki1972stochastic} point out the optimal filter satisfy a non-linear stochastic partial differential equation (SPDE), which is called   Kushner–FKK equation. Kallianpur and Striebel \cite{kallianpur1968estimation,kallianpur1969stochastic} establish the representation of the``unnormalized filter". The linear SPDE governing the dynamics of the``unnormalized filter" is investigated in \cite{duncan1967probability,mortensen1966optimal,zakai1969optimal}, which is called Duncan-Mortensen-Zakai
equation or Zakai's equation.

To model
phenomena that exhibit long range dependence, fractional Brownian motion is popularly investigated, which is originally introduced by Mandelbrot and Van Ness \cite{mandelbrot1968fractional}. Let $H\in(1/2,\,1)$ be a fixed constant. The $m$-dimensional fractional Brownian motion $B_t^H=\left(B_1^H(t),\cdots,B_m^H(t)\right), t\in[0,T]$  of Hurst parameter $H$ is a continuous, mean 0 Gaussian process with the covariance
\begin{align}\label{covfbm}
\mathbb{E}[B_i^H(t)B_j^H(s)]=\frac{1}{2}\delta_{ij}(t^{2H}+s^{2H}-| t-s|^{2H}),
\end{align}
where
\begin{equation}
\delta_{ij}=\left\{
\begin{aligned}
&1,\ \ {\rm if}\ i=j,
\\
&0,\ \ {\rm if}\ i\neq j, 
\end{aligned}
\right.
\notag
\end{equation}
$i,\,j=1,\dots,m$. Moreover, it could be generated by a standard Brownian motion through
\begin{align*}
B_j^H(t)=\int_0^t Z_H(t,\,s) dB_j(s), \quad 1\le j\le m,
\end{align*}
where 
\begin{equation*}
Z_H(t,\,s)=\kappa_H\left[\left(\frac{t}{s}\right)^{H-\frac{1}{2}}(t-s)^{H-\frac{1}{2}}-\left(H-\frac{1}{2}\right)s^{\frac{1}{2}-H}\int_s^t u^{H-\frac{3}{2}}(u-s)^{H-\frac{1}{2}}du\right]
\end{equation*}
with
\begin{equation*}
\kappa_H=\sqrt{\frac{2H\varGamma(\frac{3}{2}-H)}{\varGamma(H+\frac{1}{2})\varGamma(2-2H)}}.
\end{equation*}

Filtering problems for systems driven fractional noises are studied and applied in many areas. Linn and Amirdjanova\cite{linn2009representations} investigate the problem of nonlinear filtering of multiparameter random fields, which observed in the presence of a long-range dependent spatial noise. Su et al.\cite{su2011multiscale} presents a new ultrawideband multi-scale Kalman filter
algorithm to suppress the interference of the strong fractional noise signal. 
Afterman et al.\cite{afterman2022linear}  develop a method of asymptotic analysis of the integro-differential filtering equations driven by fractional Brownian motions. But as we know, there is not any work obtains the explicit condition that the optimal filtering should satisfy for discrete time systems driven by fractional noises.

In this paper, we study the optimal filtering problem for coupled systems driven by fractional noises:
\begin{equation*}
    \left\{\begin{array}{l}
x(k+1)=A(k)x(k)+C(k)y(k)+\sigma(k)W_1(k),\\
y(k+1)=D(k)x(k)+F(k)y(k)+\gamma(k)W_2(k),\\
x(0)=x_0,\\
y(0)=0.
\end{array}\right.
\end{equation*}
Here process $y$ is observable, which contains partial information of $x$. Then we try to construct a processes $z$ to estimate the unobservable signals by using $y$. 

The main difficulty to this problem comes from the non-Markovian of the noises, which makes it is harder to estimate the covariance of the error process. Through the properties of the noises, we use variation methods (used in \cite{ahmed1991quadratic,ahmed2002filtering}) to get the necessary condition the optimal filter should satisfy. Compared with filtering problem for continuous systems driven by fractional noises investigated in \cite{ahmed2002filtering}, we study it in discrete time view and 
concretely show the significant difference with classical Kalman filter: there is no consistent optimal filter and the coefficients of the optimal filter are influenced by the Hurst parameters of the noises.

The rest of this paper is organized as follows. In section 2, we introduce the state and measurement dynamics, then we state the form of the filter we considered. In section 3, we deal with the error process and show the dynamic of the covariance of it, so that the filtering problem transform to a control problem. In section 4, we give the necessary condition that the optimal control should satisfy, and the filtering problem is solved at the same time. In section 4, an example is considered to illustrate main results.

\section{Dynamics and filtering problem}
Consider the system governed as follows
\begin{equation}\label{x}
    \left\{\begin{array}{l}
x(k+1)=A(k)x(k)+C(k)y(k)+\sigma(k)W_1(k),\\
y(k+1)=D(k)x(k)+F(k)y(k)+\gamma(k)W_2(k),\\
x(0)=x,\\
y(0)=0.
\end{array}\right.
\end{equation}
Here we assume the process $\{x(\cdot),y(\cdot)\}$ taking values in $\mathbf{R}^n$ and $\mathbf{R}^m$, respectively. $x_0$ is a random vector independent with noises, 
$W_1(k)=B^{H_1}(k+1)-B^{H_1}(k)$ and $W_2(k)=\tilde{B}^{H_2}(k+1)-\tilde{B}^{H_2}(k)$, where $B^{H_1}$ and $\tilde{B}^{H_2}$ are independent fractional Brownian motions taking value in $\mathbf{R}^d$ with Hurst parameter $H_1, H_2\in(1/2,1)$, respectively. Through (\ref{covfbm}), it is not difficult to show $\sum_{i=k}^{+\infty}\mathbb{E}\|W_1(k)W_1(i)\|=\sum_{i=k}^{+\infty}\mathbb{E}\|W_2(k)W_2(i)\|=+\infty, \forall k\ge 0$, which describe the long range phenomena. 
It is clear that the matrices $\{A,C,\sigma,D,F,\gamma\}$ take values in $\mathbf{R}^{n\times n},\mathbf{R}^{n\times n},\mathbf{R}^{n\times d},\mathbf{R}^{m\times n},\mathbf{R}^{m\times n},\mathbf{R}^{m\times d},$ respectively. 

Denote $\mathcal{F}^y_k:=\sigma(y(0),y(1),...,y(k))$. The aim is to find a process $z$ such that $z(k)$ is $\mathcal{F}^y_k$-adapted process satisfying
\begin{align*}
\mathbb{E}z(k)=\mathbb{E}x(k),
\end{align*}
and hope to minimize $\mathbb{E}\|x(k)-z(k)\|^2$. 

\begin{remark}
It is well known that $z$ should be given by $z(k)=\mathbb{E}[x(k)|\mathcal{F}_k^y]$, but it is quite difficult. So our objective here is to find the best unbiased minimum variance (UMV) linear filter driven by the observed process $y$, as described by the following dynamic:
\begin{align}\label{z}
    \left\{\begin{array}{l}
z(k+1)=H(k)z(k)+M(k)y(k)+\Gamma(k)y(k+1),\\
z(0)=\mathbb{E}x_0,
\end{array}\right.
\end{align}
which minimize
\begin{align*}
\sum_{k=0}^Na(k)\mathbb{E}\|x(k)-z(k)\|^2.
\end{align*}
Here $H,M,\Gamma$ are suitable matrix-valued functions to be determined and $\{a(\cdot)\}$ is a given non-negative sequence.
\end{remark}

\section{Reformulation of the filtering problem as a optimal control problem}

To simplify the notation without losing the generality, we consider the case $n=m=d=1$. Define
\begin{align*}
e(k)=x(k)-z(k),\quad k=0,1,...,N
\end{align*}
to describe the variance of the filter. Here $x$ is the solution of equation (\ref{x}) and $z$ is the solution of equation (\ref{z}) corresponding to any choice of $H,M,\Gamma$. Then the error process $e$ satisfy the following stochastic difference equation:
\begin{align}\label{de}
    \left\{\begin{array}{l}
e(k+1)=[A(k)-\Gamma(k)D(k)]e(k)+[C(k)-\Gamma(k)F(k)-M(k)]y(k)\\\\\qquad\qquad+[A(k)-\Gamma(k)D(k)-H(k)]z(k)+\sigma(k)W_1(k)-\Gamma(k)\gamma(k)W_2(k),\\
\\e(0)=x_0-\mathbb{E}x_0.
\end{array}\right.
\end{align}
Since the estimate is expected to be unbiased, we determine $H,M$ by $\Gamma$ and denote
\begin{align}\label{hm}
H_\Gamma(k)=A(k)-\Gamma(k)D(k),\notag\\
M_\Gamma(k)=C(k)-\Gamma(k)F(k).
\end{align}
Then the solution to $e$ is given by
\begin{align*}
e(k)=H_\Gamma(k-1,0)e(0)+\sum_{i=0}^{k-1}\sigma(i)H_\Gamma(k-1,i+1)W_1(i)+\sum_{i=0}^{k-1}\Gamma(i)\gamma(i)H_\Gamma(k-1,i+1)W_2(i),
\end{align*}
where $H_\Gamma(k-1,i)=\Pi_{j=i}^{k-1}H_\Gamma(j)$.

~\\

Define the error covariance $K(k)=\mathbb{E}[e^2(k)]$. Trough the properties of the noises, we obtain that
\begin{align}\label{kk}
K(k)=&H_\Gamma^2(k-1,0)\mathbb{V}(x_0)\notag\\
&
+\sum_{i=0}^{k-1}\sum_{j=0}^{k-1}\left[\rho_1(i-j)\sigma(i)\sigma(j)+\rho_2(i-j)\Gamma(i)\Gamma(j)\gamma(i)\gamma(j)\right]\notag\\
&\qquad\qquad\qquad\times H_\Gamma(k-1,i+1)H_\Gamma(k-1,j+1),
\end{align}
where $\rho_l(k)=\mathbb{E}[W_l(i)W_l(i+k)]=|k+1|^{2H_l}+|k-1|^{2H_l}-2|k|^{2H_l}, l=1,2$.

Now we formulate the ﬁltering problem as a control problem. First we recall that equation (\ref{z}), with $\Gamma$ to be determined and $H, M$ are determined by $\Gamma$ through equation (\ref{hm}), gives an unbiased estimate of $x$ and the covariance of the estimate is given by equation (\ref{kk}). Then we choose $\Gamma$ to get a filter with minimum variance estimate, which means $\sum_{k=0}^NK(k)$ is minimum. In this paper, we consider a more general case. 
Our aim is to minimize $\sum_{k=0}^Na(k)K(k)$
, where $a$ is any given real positive deﬁnite symmetric
matrix-valued function. Then the optimal filtering
problem is equivalent to the optimal control problem: ﬁnd $\Gamma$ that imparts a minimum to the
functional $J$ subject to the dynamic constraint equation (\ref{kk}).

\section{The optimal filter}
In this section, we focus on solving the following control problem: the state equation is
\begin{align}\label{state}
K(k)=&H_\Gamma^2(k-1,0)\mathbb{V}(x_0)+\sum_{i=0}^{k-1}H_\Gamma^2(k-1,i+1)\left[\sigma^2(i)+\Gamma^2(i)\gamma^2(i)\right]\notag\\
&+2\sum_{i=0}^{k-1}\sum_{j=0}^{i-1}\rho_1(i-j)\sigma(i)\sigma(j)H_\Gamma(k-1,i+1)H_\Gamma(k-1,j+1)\notag\\&
+2\sum_{i=0}^{k-1}\sum_{j=0}^{i-1}\rho_2(i-j)\Gamma(i)\Gamma(j)\gamma(i)\gamma(j)H_\Gamma(k-1,i+1)H_\Gamma(k-1,j+1).
\end{align}
The cost function is  defined as
\begin{align}\label{cost}
J(\Gamma)=\sum_{k=0}^Na(k)K(k).
\end{align}

To obtain the optimal control and the optimal ﬁlter, we use the variation technique. So the $G\hat{a}teaux$ derivative of $K$ with respect to $\Gamma$  is needed. Thus, we state the following results.
~\\

\begin{lemma}
Let $\tilde{\Theta}$ denote the $G\hat{a}teaux$ derivative of the map $\Gamma\to \Theta_\Gamma$ at $\Gamma_0$ in the direction $\beta$, where $\beta$ denotes $\Gamma-\Gamma_0$ for any $\Gamma\in L^\infty\big([0,T],\mathbf{R}\big)$. Then we have
\begin{align*}
\tilde{H}(k,i)=-\sum_{j=i}^{k-1}H_{\Gamma_0}(j-1,i)H_{\Gamma_0}(k-1,j+1)D(j)\beta(j).
\end{align*}
\end{lemma}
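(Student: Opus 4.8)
The plan is to read $\tilde H(k,i)$ as the $G\hat{a}$teaux derivative of the map $\Gamma\mapsto H_\Gamma(k-1,i)=\prod_{j=i}^{k-1}H_\Gamma(j)$ --- the transition product that actually enters the covariance (\ref{state}) --- and to evaluate it by the Leibniz product rule. First I would record that for each fixed $j$ the corresponding factor is affine in the perturbation, since $H_{\Gamma_0+\epsilon\beta}(j)=A(j)-\big(\Gamma_0(j)+\epsilon\beta(j)\big)D(j)=H_{\Gamma_0}(j)-\epsilon\,\beta(j)D(j)$, so that $\frac{d}{d\epsilon}\big|_{\epsilon=0}H_{\Gamma_0+\epsilon\beta}(j)=-\beta(j)D(j)$. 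Because only the finitely many values $\Gamma(i),\dots,\Gamma(k-1)$ appear and $\epsilon\mapsto\prod_{j=i}^{k-1}H_{\Gamma_0+\epsilon\beta}(j)$ is a polynomial in $\epsilon$, the $G\hat{a}$teaux derivative exists with no analytic subtlety and coincides with the ordinary $\epsilon$-derivative of that polynomial at $0$.

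The second step is to differentiate $\prod_{j=i}^{k-1}H_{\Gamma_0+\epsilon\beta}(j)$ factor by factor and set $\epsilon=0$: the term in which the $j$-th factor is differentiated contributes $\big(\prod_{l=i}^{j-1}H_{\Gamma_0}(l)\big)\big(-\beta(j)D(j)\big)\big(\prod_{l=j+1}^{k-1}H_{\Gamma_0}(l)\big)$, with the convention that an empty product equals $1$. Identifying the two partial products with the transition products introduced right after (\ref{hm}), namely $\prod_{l=i}^{j-1}H_{\Gamma_0}(l)=H_{\Gamma_0}(j-1,i)$ and $\prod_{l=j+1}^{k-1}H_{\Gamma_0}(l)=H_{\Gamma_0}(k-1,j+1)$ (and noting $H_{\Gamma_0}(a,b)=1$ whenever $a<b$), and then summing over $j=i,\dots,k-1$, I obtain exactly
\[
\tilde H(k,i)=-\sum_{j=i}^{k-1}H_{\Gamma_0}(j-1,i)\,H_{\Gamma_0}(k-1,j+1)\,D(j)\,\beta(j).
\]

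If a more self-contained write-up is preferred, the same identity follows by induction on $k$ from the one-step relation $H_\Gamma(k,i)=H_\Gamma(k)\,H_\Gamma(k-1,i)$: the base case is immediate from the empty-product convention, and in the inductive step one has $\tilde H(k+1,i)=\big(-\beta(k)D(k)\big)H_{\Gamma_0}(k-1,i)+H_{\Gamma_0}(k)\,\tilde H(k,i)$; substituting the inductive hypothesis and using $H_{\Gamma_0}(k)H_{\Gamma_0}(k-1,j+1)=H_{\Gamma_0}(k,j+1)$ merges the isolated $j=k$ term into the sum. I do not expect any genuine obstacle in this lemma; the only thing demanding care is the index bookkeeping at the two ends of the sum ($j=i$ and $j=k-1$) together with the empty-product conventions, so that $H_{\Gamma_0}(j-1,i)$ and $H_{\Gamma_0}(k-1,j+1)$ collapse to $1$ in the right places. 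The substantive difficulties of the paper lie downstream --- assembling these derivatives into the $G\hat{a}$teaux derivative of $K$ and extracting the stationarity condition on $\Gamma$ --- rather than here.
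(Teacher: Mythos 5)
Your proposal is correct and, in substance, matches the paper: the paper proves the lemma by setting up the one-step recursion $\tilde H(k,i)=H_{\Gamma_0}(k-1)\tilde H(k-1,i)-H_{\Gamma_0}(k-1,i)D(k)\beta(k)$ with $\tilde H(i-1,i)=0$ and unrolling it by induction, which is exactly the alternative write-up you sketch in your last paragraph, while your primary Leibniz-rule presentation is just the same finite-product differentiation organized without the recursion. If anything, your version is slightly cleaner on the index bookkeeping (the paper's displayed recursion carries a $D(k)\beta(k)$ where consistency with the final formula requires $D(k-1)\beta(k-1)$), and your observation that the map is a polynomial in $\varepsilon$, so the G\^ateaux derivative exists trivially, is a point the paper leaves implicit.
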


\begin{proof}
Notice that $H_\Gamma(k,i)=H_\Gamma(k)H_\Gamma(k-1,i),\, 0\le i<k\le N-1$ and $H_\Gamma(i-1,i)=1,\,0\le i\le N-1$. Thus, we have
\begin{align*}
    \left\{\begin{array}{l}
\tilde{H}(k,i)=H_{\Gamma_0}(k-1)\tilde{H}(k-1,i)-H_{\Gamma_0}(k-1,i)D(k)\beta(k),\\
\\\tilde{H}(i-1,i)=0.
\end{array}\right.
\end{align*}
Then, by induction, we derive that 
\begin{align*}
\tilde{H}(k,i)=-\sum_{j=i}^{k-1}H_{\Gamma_0}(j-1,i)H_{\Gamma_0}(k-1,j+1)D(j)\beta(j).
\end{align*}
\end{proof}
Now we give the $G\hat{a}teaux$ derivative of $K$. 

\begin{theorem}\label{tildeKK}
The $G\hat{a}teaux$ derivative of $K$ at $\Gamma_0$ in the direction $\beta$ is
\begin{align}\label{tildek}
\tilde{K}(k)=\sum_{i=0}^{k-1}Q(k-1,i)\beta(i), \quad 1\le k\le N,
\end{align}
where
\begin{align*}
Q(k-1,i)=&-2H_{\Gamma_0}(k-1,0)\mathbb{V}(x_0)H_{\Gamma_0}(i-1,0)H_{\Gamma_0}(k-1,i+1)\\
&+\sum_{j=0}^{k-1}\rho_2(i-j)\Gamma_0(j)\gamma(i)\gamma(j)H_{\Gamma_0}(k-1,i+1)H_{\Gamma_0}(k-1,j+1)\\
&-\sum_{l=0}^{i-1}\sum_{j=0}^{k-1}\left[\rho_1(l-j)\sigma(l)\sigma(j)+\rho_2(l-j)\Gamma_0(l)\Gamma_0(j)\gamma(l)\gamma(j)\right] \\&\qquad\qquad\times H_{\Gamma_0}(k-1,j+1)H_{\Gamma_0}(i-1,l+1)H_{\Gamma_0}(k-1,i+1)D(i).
\end{align*}
\end{theorem}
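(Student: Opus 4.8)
The plan is to differentiate the state equation (\ref{state}) directly, regarding $K(k)$ as a polynomial functional of $\Gamma$. It is cleanest to work from the equivalent symmetric form (\ref{kk}), in which the $\Gamma$-dependence enters only through (i) the finite products $H_\Gamma(k-1,0)$ and $H_\Gamma(k-1,i+1)$, whose G\^{a}teaux derivatives are furnished by the preceding Lemma, and (ii) the explicit scalar factors $\Gamma(i)\Gamma(j)$ (which become $\Gamma^2(i)$ on the diagonal $i=j$). Since every summand in (\ref{kk}) is a finite product of polynomial functions of $\Gamma$, the G\^{a}teaux derivative $\tilde{K}(k)$ exists and is obtained summand by summand via the ordinary Leibniz rule; no functional-analytic subtlety arises, so the whole argument is a bookkeeping computation.

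First I would record the contributions term by term. Differentiating the initial-condition term $H_\Gamma^2(k-1,0)\mathbb{V}(x_0)$ gives $2H_{\Gamma_0}(k-1,0)\mathbb{V}(x_0)\,\tilde{H}(k-1,0)$. Differentiating a generic off-diagonal summand $\rho_2(i-j)\Gamma(i)\Gamma(j)\gamma(i)\gamma(j)H_\Gamma(k-1,i+1)H_\Gamma(k-1,j+1)$ produces, by the product rule, an \emph{explicit} piece in which one of $\Gamma(i),\Gamma(j)$ is replaced by $\beta(\cdot)$, and two \emph{$\tilde{H}$-pieces} in which one of $H_\Gamma(k-1,i+1)$, $H_\Gamma(k-1,j+1)$ is replaced by its derivative; the $\rho_1\sigma(i)\sigma(j)$-summands, having no explicit $\Gamma$, contribute only $\tilde{H}$-pieces. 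Using the invariance of the double sum under $i\leftrightarrow j$ and the evenness of $\rho_l$, the two $\tilde{H}$-pieces (resp. the two explicit pieces) can be identified with one another, which halves the work. The explicit pieces, once the surviving free index is renamed $i$, already assemble into the middle ($\rho_2$) group of $Q(k-1,i)$; the initial-condition term and the $\tilde{H}$-pieces still need the Lemma.

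It then remains to substitute the Lemma's formula for each occurrence of $\tilde{H}(k-1,\cdot)$; this turns the initial-condition term into a single sum and each $\tilde{H}$-piece double sum into a triple sum, the innermost index being the fresh index introduced by the Lemma and the one that ends up carrying $\beta$. The decisive step is the ensuing reindexing: interchange the orders of summation so that this innermost index becomes the outermost one, rename it $i$, and rename the other two indices consistently (one becoming the $l$ of the statement, the other the $j$), while repeatedly using the concatenation identity $H_{\Gamma_0}(a,b)=H_{\Gamma_0}(a,c+1)H_{\Gamma_0}(c,b)$ for $b\le c\le a$ to recognise the nested product $H_{\Gamma_0}(k-1,j+1)H_{\Gamma_0}(i-1,l+1)H_{\Gamma_0}(k-1,i+1)$ of $Q$. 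Reading off the coefficient of $\beta(i)$ from the resulting expression then gives the three displayed groups of $Q(k-1,i)$ — the $\mathbb{V}(x_0)$-term, the $\rho_2$-sum, and the $[\rho_1+\rho_2]$ double sum — and hence (\ref{tildek}). The part I expect to be the real obstacle is exactly this reorganisation: after the interchange of summations the index ranges must be fused correctly (the Lemma's sum runs over an index squeezed between two of the outer ones, and this constraint has to be merged with $0\le i,j\le k-1$), and the off-diagonal symmetrisation together with the evenness of $\rho_l$ must be applied without gaining or losing a factor of $2$; everything else is a long but mechanical application of the product rule.
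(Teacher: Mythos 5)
Your proposal follows essentially the same route as the paper's own proof: split (\ref{kk}) into the initial-condition term $I_1$ and the double sum, apply the product rule to each summand to separate an explicit-$\Gamma$ piece from the $\tilde{H}$-pieces, substitute the Lemma for $\tilde{H}$, use the $i\leftrightarrow j$ symmetry together with the evenness of $\rho_l$, and reindex the resulting multiple sums to read off the coefficient of $\beta(i)$ --- precisely the paper's decomposition $Q=Q_1+Q_2+Q_3$. The factor-of-two bookkeeping in the symmetrisation step, which you single out as the delicate point, is indeed where the paper's own write-up is tersest, so your caution there is well placed.
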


\begin{proof}
Rewrite equation (\ref{kk}) as
\begin{align*}
K(k)=I_1(k-1)+\sum_{i=0}^{k-1}\sum_{j=0}^{k-1}I_2(k-1,i,j),
\end{align*}
then we deal with them separately. 

For $I_1$,
\begin{align}\label{i11}
\tilde{I}_1(k-1)=&2H_{\Gamma_0}(k-1,0)\mathbb{V}(x_0)\tilde{H}(k-1,0)\notag\\
=&-\sum_{i=1}^{k-1}2H_{\Gamma_0}(k-1,0)\mathbb{V}(x_0)H_{\Gamma_0}(i-1,0)H_{\Gamma_0}(k-1,i+1)\beta(i)\notag\\
:=&\sum_{i=0}^{k-1}Q_1(k-1,i)\beta(i).
\end{align}

The second part is more complex, taking the $G\hat{a}teaux$ derivative, we have
\begin{align*}
\tilde{I}_2(k-1,i,j)=&I_{21}(k-1,i,j)\beta(i)+I_{21}(k-1,j,i)\beta(j)\\
&+I_{22}(k-1,i,j)+I_{22}(k-1,j,i),
\end{align*}
where
\begin{align*}
I_{21}(k-1,i,j)=\rho_2(i-j)\Gamma_0(j)\gamma(i)\gamma(j)H_{\Gamma_0}(k-1,i+1)H_{\Gamma_0}(k-1,j+1)
\end{align*}
and
\begin{align*}
I_{22}(k-1,i,j)=&\left[\rho_1(i-j)\sigma(i)\sigma(j)+\rho_2(i-j)\Gamma_0(i)\Gamma_0(j)\gamma(i)\gamma(j)\right]\\
&\times H_{\Gamma_0}(k-1,j+1)\tilde{H}(k-1,i+1)\\
=&-\left[\rho_1(i-j)\sigma(i)\sigma(j)+\rho_2(i-j)\Gamma_0(i)\Gamma_0(j)\gamma(i)\gamma(j)\right]\\
&\times H_{\Gamma_0}(k-1,j+1)\sum_{l=i+1}^{k-1}H_{\Gamma_0}(l-1,i+1)H_{\Gamma_0}(k-1,l+1)D(l)\beta(l).
\end{align*}
Then take summation separately, we have that
\begin{align}\label{i21}
&\sum_{i=0}^{k-1}\sum_{j=0}^{k-1}I_{21}(k-1,i,j)\beta(i)\notag\\
&=\sum_{i=0}^{k-1}\sum_{j=0}^{k-1}\rho_2(i-j)\Gamma_0(j)\gamma(i)\gamma(j)H_{\Gamma_0}(k-1,i+1)H_{\Gamma_0}(k-1,j+1)\beta(i)
\notag\\&:=\sum_{i=0}^{k-1}Q_2(k-1,i)\beta(i)
\end{align}
and
\begin{align}\label{i22}
&\sum_{i=0}^{k-1}\sum_{j=0}^{k-1}I_{22}(k-1,i,j)\notag\\&=-\sum_{i=0}^{k-1}\sum_{l=0}^{i-1}\sum_{j=0}^{k-1}\left[\rho_1(l-j)\sigma(l)\sigma(j)+\rho_2(l-j)\Gamma_0(l)\Gamma_0(j)\gamma(l)\gamma(j)\right] \notag\\&\qquad\qquad\qquad\times H_{\Gamma_0}(k-1,j+1)H_{\Gamma_0}(i-1,l+1)H_{\Gamma_0}(k-1,i+1)D(i)\beta(i)\notag\\
&:=\sum_{i=0}^{k-1}Q_3(k-1,i)\beta(i).
\end{align}
Through (\ref{i11}), (\ref{i21}), (\ref{i22}) and define $Q=Q_1+Q_2+Q_3$, we complete the proof.

\end{proof}

~\\

By Theorem \ref{tildeKK}, the  $G\hat{a}teaux$ derivative of $J$ is given by
\begin{align*}
\tilde{J}:=\frac{dJ\Big(\Gamma_0(\cdot)+\varepsilon \beta(\cdot)\Big)}{d\varepsilon}\Bigg|_{\varepsilon=0}=\sum_{k=1}^Na(k)\tilde{K}(k)=\sum_{i=0}^{N-1}\left(\sum_{k=i+1}^Na(k)Q(k-1,i)\right)\beta(i).
\end{align*}

Assume $\Gamma_0$ is the optimal control, then we have
$
\tilde{J}\ge 0
$ 
for any $\beta$. By the arbitrary of $\beta$, $\sum_{k=i+1}^Na(k)Q(k-1,i)=0$ for all $0\le i\le N-1$, then we conclude the following theorem:
\begin{theorem}
Assume that $\Gamma_0$ is the optimal control for the control system (\ref{state}), (\ref{cost}), then the following system of equations hold:
\begin{align}\label{nec}
    \left\{\begin{array}{l}
H_{\Gamma_0}(k)=A(k)-\Gamma_0(k)D(k),\\\\
H_{\Gamma_0}(k-1,i)=\Pi_{j=i}^{k-1}H_{\Gamma_0}(j),\\\\
Q_1(k-1,i)=-2H_{\Gamma_0}(k-1,0)\mathbb{V}(x_0)H_{\Gamma_0}(i-1,0)H_{\Gamma_0}(k-1,i+1),
\\\\
Q_2(k-1,i)=\sum_{j=0}^{k-1}\rho_2(i-j)\Gamma_0(j)\gamma(i)\gamma(j)H_{\Gamma_0}(k-1,i+1)H_{\Gamma_0}(k-1,j+1),\\\\
Q_3(k-1,i)=-\sum_{l=0}^{i-1}\sum_{j=0}^{k-1}\left[\rho_1(l-j)\sigma(l)\sigma(j)+\rho_2(l-j)\Gamma_0(l)\Gamma_0(j)\gamma(l)\gamma(j)\right] \\\\\qquad\qquad\qquad\times H_{\Gamma_0}(k-1,j+1)H_{\Gamma_0}(i-1,l+1)H_{\Gamma_0}(k-1,i+1)D(i),\\\\
Q=Q_1+Q_2+Q_3,
\\\\
\sum_{k=i+1}^Na(k)Q(k-1,i)=0,\quad i=0,1,...,N-1.
\end{array}\right.
\end{align}
At the same time, the optimal filter is given by
\begin{align*}
    \left\{\begin{array}{l}
z_0(k+1)=\left(A(k)-\Gamma_0(k)D(k)\right)z_0(k)+\left(C(k)-\Gamma_0(k)F(k)\right)y(k)+\Gamma_0(k)y(k+1),\\
z_0(0)=\mathbb{E}x_0.
\end{array}\right.
\end{align*}
\end{theorem}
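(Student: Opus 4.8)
The plan is to obtain system (\ref{nec}) directly from the first-order optimality condition for the functional $J$ in (\ref{cost}); the substantive computations --- the G\^ateaux derivatives of the transition products and of $K$ --- are already supplied by Lemma~1 and Theorem~\ref{tildeKK}, so what remains is short.

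First I would argue that $\tilde J=0$ for every direction $\beta$. Indeed, since $\Gamma_0$ minimizes $J$ over the \emph{linear} space $L^\infty([0,T],\mathbf{R})$, for each $\beta=\Gamma-\Gamma_0$ the real function $\varepsilon\mapsto J(\Gamma_0+\varepsilon\beta)$ has an interior minimum at $\varepsilon=0$, hence vanishing derivative there. Next I would insert the formula $\tilde K(k)=\sum_{i=0}^{k-1}Q(k-1,i)\beta(i)$ from Theorem~\ref{tildeKK} into $\tilde J=\sum_{k=1}^N a(k)\tilde K(k)$ and interchange the two finite sums --- rewriting the index set $\{(k,i):1\le k\le N,\ 0\le i\le k-1\}$ as $\{(k,i):0\le i\le N-1,\ i+1\le k\le N\}$ --- to get
\[
\tilde J=\sum_{i=0}^{N-1}\Big(\sum_{k=i+1}^{N}a(k)Q(k-1,i)\Big)\beta(i).
\]

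Then, since $\beta$ is an arbitrary element of $L^\infty([0,T],\mathbf{R})$ --- in this discrete setting simply an arbitrary vector $(\beta(0),\dots,\beta(N-1))\in\mathbf{R}^{N}$ --- the vanishing of $\tilde J$ for all $\beta$ forces every bracketed coefficient to vanish, i.e.\ $\sum_{k=i+1}^{N}a(k)Q(k-1,i)=0$ for $i=0,\dots,N-1$. Gathering this equation together with the definition of $H_{\Gamma_0}$ from (\ref{hm}), the product identity $H_{\Gamma_0}(k-1,i)=\prod_{j=i}^{k-1}H_{\Gamma_0}(j)$, and the explicit expressions for $Q_1,Q_2,Q_3$ and $Q=Q_1+Q_2+Q_3$ derived in the proof of Theorem~\ref{tildeKK} gives exactly system (\ref{nec}). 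The asserted optimal filter is then nothing but dynamic (\ref{z}) specialized to $\Gamma=\Gamma_0$, $H=H_{\Gamma_0}$, $M=M_{\Gamma_0}$, the last two prescribed by (\ref{hm}) so that the estimate remains unbiased.

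The only step that is not purely mechanical is the passage from the variational inequality $\tilde J\ge 0$ to the equality $\tilde J=0$, which relies on the admissible class being a vector space (so that $-\beta$ is admissible whenever $\beta$ is) and on the G\^ateaux derivative existing along every direction, both already established. I would also point out that (\ref{nec}) is only \emph{necessary}: the coupled equations do not collapse to a forward recursion for $\Gamma_0$ decoupled from the future, so --- unlike the Kalman filter --- no consistent optimal filter exists, which is the phenomenon highlighted in the introduction.
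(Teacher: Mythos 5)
Your proposal is correct and follows essentially the same route as the paper: substitute the expression for $\tilde K$ from Theorem~\ref{tildeKK} into $\tilde J$, interchange the finite sums, and use the first-order optimality condition together with the arbitrariness of $\beta$ to force each coefficient $\sum_{k=i+1}^{N}a(k)Q(k-1,i)$ to vanish, the filter then being (\ref{z}) with $H,M$ fixed by (\ref{hm}). Your explicit upgrade from $\tilde J\ge 0$ to $\tilde J=0$ via the linearity of the admissible class is exactly the step the paper leaves implicit in the phrase ``by the arbitrary of $\beta$,'' so the two arguments coincide.
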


\begin{remark}
Compared with classical filtering problem, the optimal filter we obtained could not make $\mathbb{E}\|x(k)-z(k)\|^2$  minimize for all $0\le k\le N-1$, because the noises are not Markovian. We could only get a necessary condition for the optimal filter should satisfy under the cost function (\ref{cost}). Moreover, the optimal filter may be different if we change the weight function $a(k)$ or the Hurst parameters $H_1,H_2$. To show these, we consider the following example. 
\end{remark}

\section{Example}
Consider the system governed as follows
\begin{equation*}
    \left\{\begin{array}{l}
x(k+1)=W_1(k),\\
y(k+1)=x(k)-W_2(k),\quad k=0,1,\\
x(0)=0,\\
y(0)=0.
\end{array}\right.
\end{equation*}
Here $W_1, W_2$ are fractional noises with Hurst parameter $H_0$ such that $\rho_1(1)=\rho_2(1)=\rho\in(0,1)$.

In this case, $\mathbb{V}(x_0)=0$. Moreover, through (\ref{nec}), we have 
$$H_{\Gamma_0}(0,0)=\Gamma_0(0),\quad H_{\Gamma_0}(1,0)=\Gamma_0(0)\Gamma_0(1),\quad H_{\Gamma_0}(1,1)=\Gamma_0(1),$$
and 
\begin{align*}
Q_1=Q_3(0,0)=&Q_3(1,0)=0,  \quad Q_2(0,0)=\Gamma_0(0), \quad Q_2(1,0)=\left(\rho+\Gamma_0(0)\right)\Gamma_0(1)^2, \\\\&Q_2(1,1)=\left(\rho\Gamma_0(0)+1\right)\Gamma_0(1), \quad Q_3(1,1)=1+\Gamma_0(0)^2.
\end{align*}
Then through Theorem \ref{tildeKK}, the $G\hat{a}teaux$ derivative of $K$ is given by $\tilde{K}(1)=\Gamma_0(0)\beta(0), $ and 
\begin{align*}
\tilde{K}(2)=\left(\rho+\Gamma_0(0)\right)\Gamma_0(1)^2\beta(0)+\left[\left(\rho\Gamma_0(0)+1\right)\Gamma_0(1)+\left(1+\Gamma_0(0)\right)^2\right]\beta(1).
\end{align*}
To make $K(1)$ minimize, $\Gamma_0(0)$ must be $0$. Then it is clear that under the condition $\Gamma_0(0)=0,$ there is not a $\Gamma_0(1)$ such that $\left(\rho+\Gamma_0(0)\right)\Gamma_0(1)^2=\left(\rho\Gamma_0(0)+1\right)\Gamma_0(1)+\left(1+\Gamma_0(0)\right)^2=0$, since $\rho\in(0,1)$.

Then we try to give the condition that $\Gamma_0$ should satisfy, through the last condition of (\ref{nec}), we have 
\begin{align*}
a(2)\left[\left(\rho\Gamma_0(0)+1\right)\Gamma_0(1)+\left(1+\Gamma_0(0)\right)^2\right]=0,
\end{align*}
and
\begin{align*}
a(1)\Gamma_0(0)+a(2)\left(\rho+\Gamma_0(0)\right)\Gamma_0(1)^2=0.
\end{align*}
So $\Gamma_0(1)=-\frac{(1+\Gamma_0(0))^2}{\rho\Gamma_0(0)+1}$ and $\Gamma_0(0)$ is the solution to the following equation:
\begin{align*}
a(1)\Gamma_0(0)\left(\rho\Gamma_0(0)+1\right)^2+a(2)(\rho+\Gamma_0(0))(1+\Gamma_0(0))^4=0,
\end{align*}
which shows the optimal filter may be different if we change $a(\cdot)$ or $\rho$.

\begin{figure}
    \centering
    \includegraphics[width=1\linewidth]{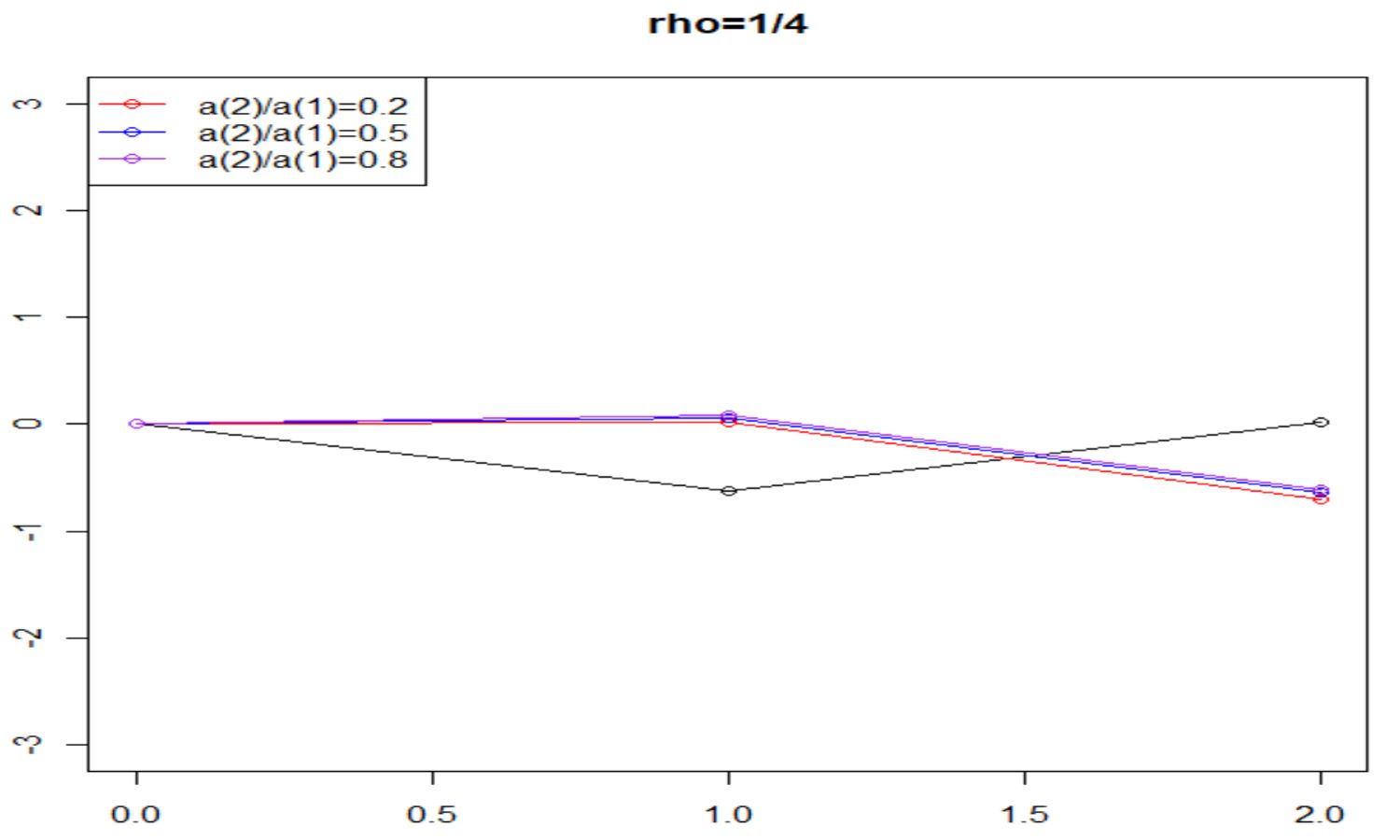}
    \caption{Filtering corresponding to different weight functions in the case $\rho=1/4$}
    \label{fig:enter-label}
\end{figure}

\begin{figure}
    \centering
    \includegraphics[width=1\linewidth]{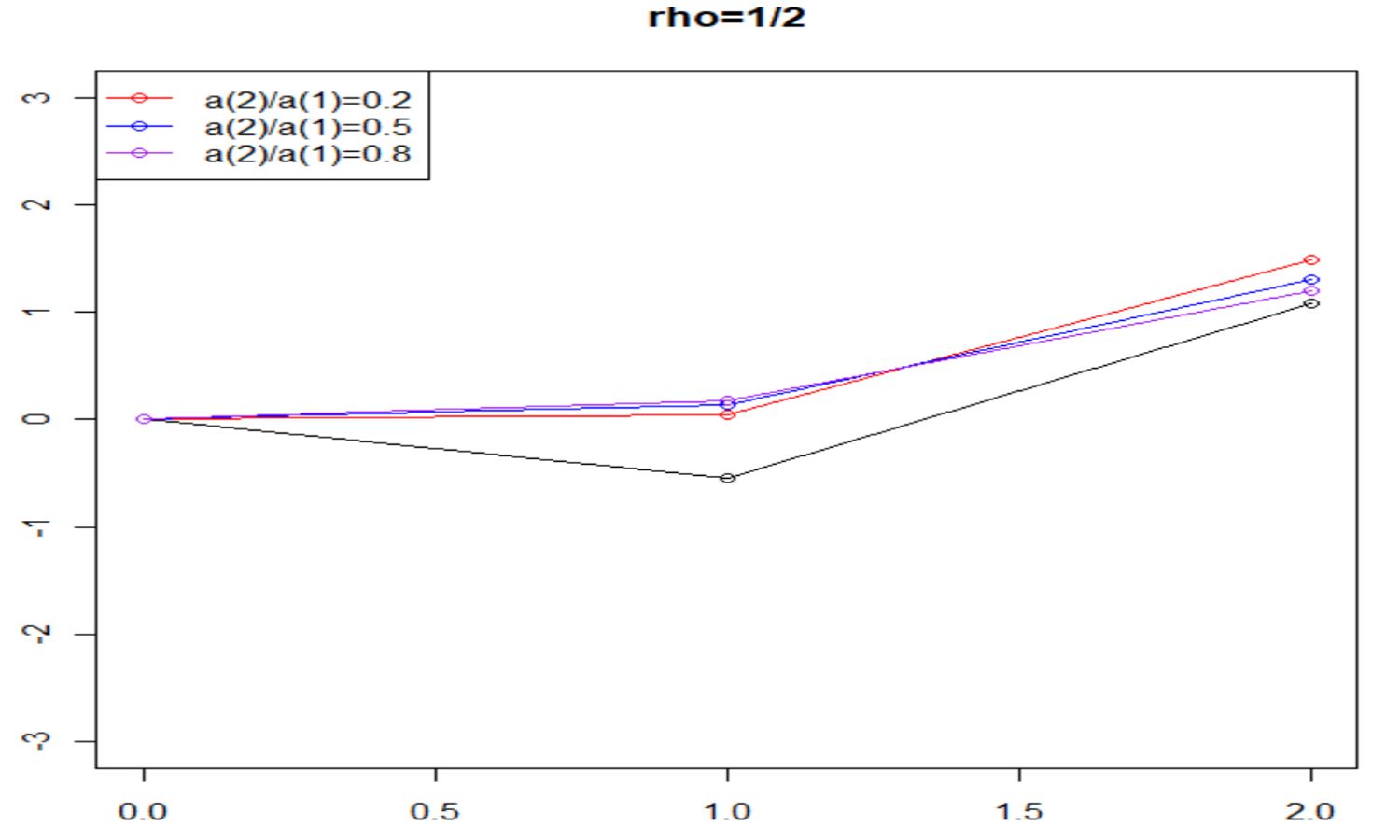}
    \caption{Filtering corresponding to different weight functions in the case $\rho=1/2$}
    \label{fig:enter-label}
\end{figure}

\begin{figure}
    \centering
    \includegraphics[width=1\linewidth]{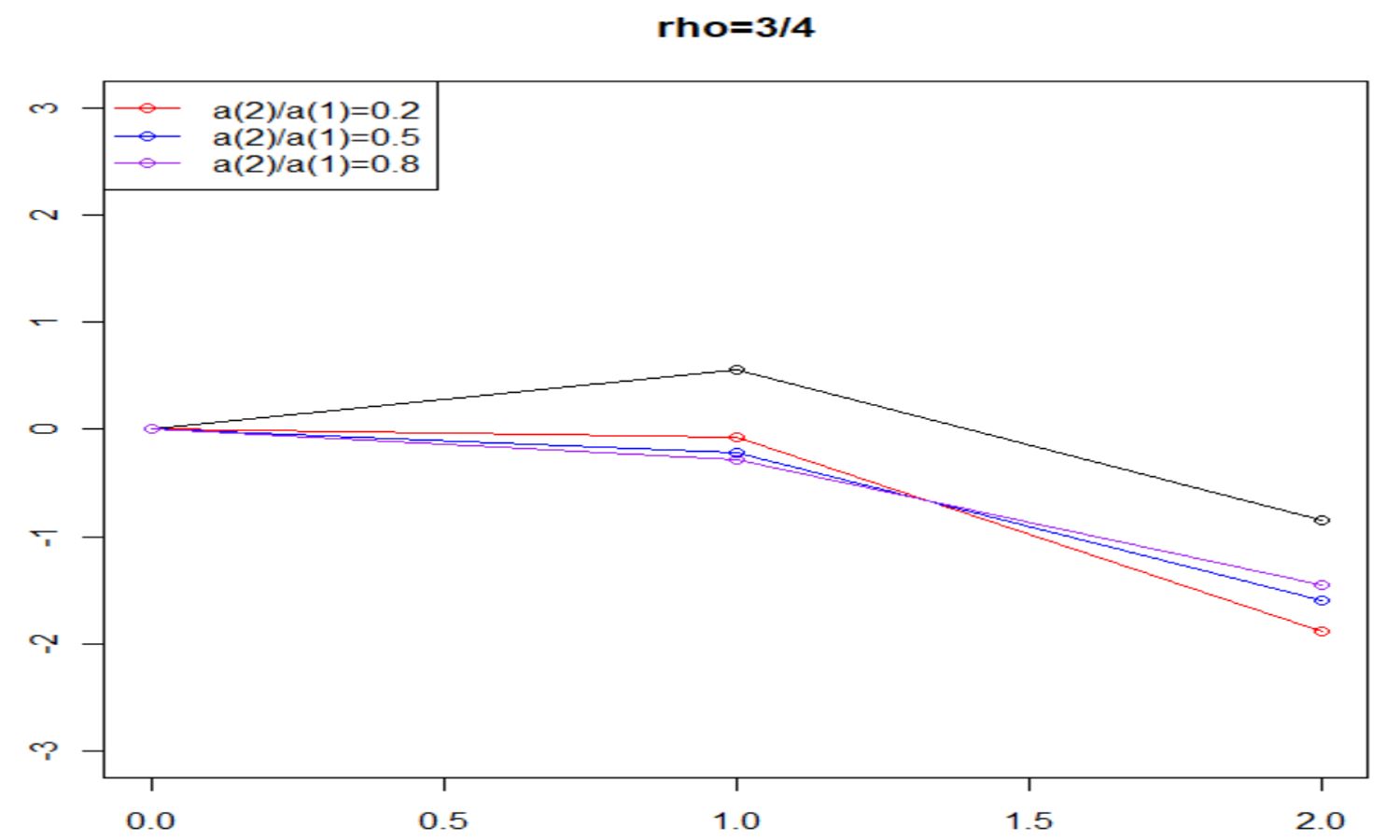}
    \caption{Filtering corresponding to different weight functions in the case $\rho=3/4$}
    \label{fig:enter-label}
\end{figure}
These simulations show that the optimal filtering is different when the weight function is different. Furthermore, the larger $a(i)$ is, the better the estimation of $x(i)$ is.

\FloatBarrier
\bibliography{main}


\begin{thebibliography}{20}
\ifx \bisbn   \undefined \def \bisbn  #1{ISBN #1}\fi
\ifx \binits  \undefined \def \binits#1{#1}\fi
\ifx \bauthor  \undefined \def \bauthor#1{#1}\fi
\ifx \batitle  \undefined \def \batitle#1{#1}\fi
\ifx \bjtitle  \undefined \def \bjtitle#1{#1}\fi
\ifx \bvolume  \undefined \def \bvolume#1{\textbf{#1}}\fi
\ifx \byear  \undefined \def \byear#1{#1}\fi
\ifx \bissue  \undefined \def \bissue#1{#1}\fi
\ifx \bfpage  \undefined \def \bfpage#1{#1}\fi
\ifx \blpage  \undefined \def \blpage #1{#1}\fi
\ifx \burl  \undefined \def \burl#1{\textsf{#1}}\fi
\ifx \doiurl  \undefined \def \doiurl#1{\url{https://doi.org/#1}}\fi
\ifx \betal  \undefined \def \betal{\textit{et al.}}\fi
\ifx \binstitute  \undefined \def \binstitute#1{#1}\fi
\ifx \binstitutionaled  \undefined \def \binstitutionaled#1{#1}\fi
\ifx \bctitle  \undefined \def \bctitle#1{#1}\fi
\ifx \beditor  \undefined \def \beditor#1{#1}\fi
\ifx \bpublisher  \undefined \def \bpublisher#1{#1}\fi
\ifx \bbtitle  \undefined \def \bbtitle#1{#1}\fi
\ifx \bedition  \undefined \def \bedition#1{#1}\fi
\ifx \bseriesno  \undefined \def \bseriesno#1{#1}\fi
\ifx \blocation  \undefined \def \blocation#1{#1}\fi
\ifx \bsertitle  \undefined \def \bsertitle#1{#1}\fi
\ifx \bsnm \undefined \def \bsnm#1{#1}\fi
\ifx \bsuffix \undefined \def \bsuffix#1{#1}\fi
\ifx \bparticle \undefined \def \bparticle#1{#1}\fi
\ifx \barticle \undefined \def \barticle#1{#1}\fi
\bibcommenthead
\ifx \bconfdate \undefined \def \bconfdate #1{#1}\fi
\ifx \botherref \undefined \def \botherref #1{#1}\fi
\ifx \url \undefined \def \url#1{\textsf{#1}}\fi
\ifx \bchapter \undefined \def \bchapter#1{#1}\fi
\ifx \bbook \undefined \def \bbook#1{#1}\fi
\ifx \bcomment \undefined \def \bcomment#1{#1}\fi
\ifx \oauthor \undefined \def \oauthor#1{#1}\fi
\ifx \citeauthoryear \undefined \def \citeauthoryear#1{#1}\fi
\ifx \endbibitem  \undefined \def \endbibitem {}\fi
\ifx \bconflocation  \undefined \def \bconflocation#1{#1}\fi
\ifx \arxivurl  \undefined \def \arxivurl#1{\textsf{#1}}\fi
\csname PreBibitemsHook\endcsname

\bibitem[\protect\citeauthoryear{Kalman and Bucy}{1961}]{kalman1961neww}
\begin{barticle}
\bauthor{\bsnm{Kalman}, \binits{R.E.}},
\bauthor{\bsnm{Bucy}, \binits{R.S.}}:
\batitle{New results in linear filtering and prediction theory}.
\bjtitle{Journal of Basic Engineering}
\bvolume{83}(\bissue{1}),
\bfpage{95}--\blpage{108}
(\byear{1961})
\end{barticle}
\endbibitem

\bibitem[\protect\citeauthoryear{Chui and Chen}{2017}]{chui2017kalman}
\begin{botherref}
\oauthor{\bsnm{Chui}, \binits{C.K.}},
\oauthor{\bsnm{Chen}, \binits{G.}}:
Kalman filtering.
Springer
(2017)
\end{botherref}
\endbibitem

\bibitem[\protect\citeauthoryear{Lawrence and Kaufman}{1971}]{lawrence1971kalman}
\begin{barticle}
\bauthor{\bsnm{Lawrence}, \binits{R.}},
\bauthor{\bsnm{Kaufman}, \binits{H.}}:
\batitle{The kalman filter for the equalization of a digital communications channel}.
\bjtitle{IEEE Transactions on Communication Technology}
\bvolume{19}(\bissue{6}),
\bfpage{1137}--\blpage{1141}
(\byear{1971})
\end{barticle}
\endbibitem

\bibitem[\protect\citeauthoryear{Masreliez}{1975}]{masreliez1975approximate}
\begin{barticle}
\bauthor{\bsnm{Masreliez}, \binits{C.}}:
\batitle{Approximate non-gaussian filtering with linear state and observation relations}.
\bjtitle{IEEE Transactions on Automatic Control}
\bvolume{20}(\bissue{1}),
\bfpage{107}--\blpage{110}
(\byear{1975})
\end{barticle}
\endbibitem

\bibitem[\protect\citeauthoryear{Tang et~al.}{2020}]{tang2020robot}
\begin{barticle}
\bauthor{\bsnm{Tang}, \binits{M.}},
\bauthor{\bsnm{Chen}, \binits{Z.}},
\bauthor{\bsnm{Yin}, \binits{F.}}:
\batitle{Robot tracking in slam with masreliez-martin unscented kalman filter}.
\bjtitle{International Journal of Control, Automation and Systems}
\bvolume{18}(\bissue{9}),
\bfpage{2315}--\blpage{2325}
(\byear{2020})
\end{barticle}
\endbibitem

\bibitem[\protect\citeauthoryear{Kushner}{1964}]{kushner1964dynamical}
\begin{barticle}
\bauthor{\bsnm{Kushner}, \binits{H.J.}}:
\batitle{On the dynamical equations of conditional probability density functions, with applications to optimal stochastic control theory}.
\bjtitle{Journal of Mathematical Analysis and Applications}
\bvolume{8}(\bissue{2}),
\bfpage{332}--\blpage{344}
(\byear{1964})
\end{barticle}
\endbibitem

\bibitem[\protect\citeauthoryear{Kushner}{1967}]{kushner1967dynamical}
\begin{barticle}
\bauthor{\bsnm{Kushner}, \binits{H.J.}}:
\batitle{Dynamical equations for optimal nonlinear filtering}.
\bjtitle{Journal of Differential Equations}
\bvolume{3}(\bissue{2}),
\bfpage{179}--\blpage{190}
(\byear{1967})
\end{barticle}
\endbibitem

\bibitem[\protect\citeauthoryear{Stratonovich}{1960}]{stratonovich1960conditional}
\begin{barticle}
\bauthor{\bsnm{Stratonovich}, \binits{R.}}:
\batitle{Conditional markov processes}.
\bjtitle{Theory of Probability \& Its Applications}
\bvolume{5}(\bissue{2}),
\bfpage{156}--\blpage{178}
(\byear{1960})
\end{barticle}
\endbibitem

\bibitem[\protect\citeauthoryear{Fujisaki et~al.}{1972}]{Fujisaki1972stochastic}
\begin{barticle}
\bauthor{\bsnm{Fujisaki}, \binits{M.}},
\bauthor{\bsnm{Kallianpur}, \binits{G.}},
\bauthor{\bsnm{Kunita}, \binits{H.}}:
\batitle{Stochastic differential equations for the non linear filtering problem}.
\bjtitle{Osaka Journal of Mathematics}
\bvolume{9}(\bissue{1}),
\bfpage{19}--\blpage{40}
(\byear{1972})
\end{barticle}
\endbibitem

\bibitem[\protect\citeauthoryear{Kallianpur and Striebel}{1968}]{kallianpur1968estimation}
\begin{barticle}
\bauthor{\bsnm{Kallianpur}, \binits{G.}},
\bauthor{\bsnm{Striebel}, \binits{C.}}:
\batitle{Estimation of stochastic systems: Arbitrary system process with additive white noise observation errors}.
\bjtitle{The Annals of Mathematical Statistics}
\bvolume{39}(\bissue{3}),
\bfpage{785}--\blpage{801}
(\byear{1968})
\end{barticle}
\endbibitem

\bibitem[\protect\citeauthoryear{Kallianpur and Striebel}{1969}]{kallianpur1969stochastic}
\begin{barticle}
\bauthor{\bsnm{Kallianpur}, \binits{G.}},
\bauthor{\bsnm{Striebel}, \binits{C.}}:
\batitle{Stochastic differential equations occurring in the estimation of continuous parameter stochastic processes}.
\bjtitle{Theory of Probability \& Its Applications}
\bvolume{14}(\bissue{4}),
\bfpage{567}--\blpage{594}
(\byear{1969})
\end{barticle}
\endbibitem

\bibitem[\protect\citeauthoryear{Duncan}{1967}]{duncan1967probability}
\begin{botherref}
\oauthor{\bsnm{Duncan}, \binits{T.E.}}:
Probability densities for diffusion processes with applications to nonlinear filtering theory and detection theory.
Stanford University
(1967)
\end{botherref}
\endbibitem

\bibitem[\protect\citeauthoryear{Mortensen}{1966}]{mortensen1966optimal}
\begin{botherref}
\oauthor{\bsnm{Mortensen}, \binits{R.E.}}:
Optimal control of continuous-time stochastic systems.
University of California, Berkeley
(1966)
\end{botherref}
\endbibitem

\bibitem[\protect\citeauthoryear{Zakai}{1969}]{zakai1969optimal}
\begin{barticle}
\bauthor{\bsnm{Zakai}, \binits{M.}}:
\batitle{On the optimal filtering of diffusion processes}.
\bjtitle{Zeitschrift f{\"u}r Wahrscheinlichkeitstheorie und verwandte Gebiete}
\bvolume{11}(\bissue{3}),
\bfpage{230}--\blpage{243}
(\byear{1969})
\end{barticle}
\endbibitem

\bibitem[\protect\citeauthoryear{Mandelbrot and Van~Ness}{1968}]{mandelbrot1968fractional}
\begin{barticle}
\bauthor{\bsnm{Mandelbrot}, \binits{B.B.}},
\bauthor{\bsnm{Van~Ness}, \binits{J.W.}}:
\batitle{Fractional brownian motions, fractional noises and applications}.
\bjtitle{SIAM review}
\bvolume{10}(\bissue{4}),
\bfpage{422}--\blpage{437}
(\byear{1968})
\end{barticle}
\endbibitem

\bibitem[\protect\citeauthoryear{Linn and Amirdjanova}{2009}]{linn2009representations}
\begin{barticle}
\bauthor{\bsnm{Linn}, \binits{M.}},
\bauthor{\bsnm{Amirdjanova}, \binits{A.}}:
\batitle{Representations of the optimal filter in the context of nonlinear filtering of random fields with fractional noise}.
\bjtitle{Stochastic processes and their applications}
\bvolume{119}(\bissue{8}),
\bfpage{2481}--\blpage{2500}
(\byear{2009})
\end{barticle}
\endbibitem

\bibitem[\protect\citeauthoryear{Su et~al.}{2011}]{su2011multiscale}
\begin{barticle}
\bauthor{\bsnm{Su}, \binits{L.}},
\bauthor{\bsnm{Zhang}, \binits{Y.}},
\bauthor{\bsnm{Ma}, \binits{Y.}},
\bauthor{\bsnm{Li}, \binits{J.}},
\bauthor{\bsnm{Li}, \binits{F.}}:
\batitle{Multiscale kf algorithm for strong fractional noise interference suppression in discrete-time uwb systems}.
\bjtitle{Discrete Dynamics in Nature and Society}
\bvolume{2011}(\bissue{1}),
\bfpage{356421}
(\byear{2011})
\end{barticle}
\endbibitem

\bibitem[\protect\citeauthoryear{Afterman et~al.}{2022}]{afterman2022linear}
\begin{barticle}
\bauthor{\bsnm{Afterman}, \binits{D.}},
\bauthor{\bsnm{Chigansky}, \binits{P.}},
\bauthor{\bsnm{Kleptsyna}, \binits{M.}},
\bauthor{\bsnm{Marushkevych}, \binits{D.}}:
\batitle{Linear filtering with fractional noises: large time and small noise asymptotics}.
\bjtitle{SIAM Journal on Control and Optimization}
\bvolume{60}(\bissue{3}),
\bfpage{1463}--\blpage{1487}
(\byear{2022})
\end{barticle}
\endbibitem

\bibitem[\protect\citeauthoryear{Ahmed and Li}{1991}]{ahmed1991quadratic}
\begin{barticle}
\bauthor{\bsnm{Ahmed}, \binits{N.}},
\bauthor{\bsnm{Li}, \binits{P.}}:
\batitle{Quadratic regulator theory and linear filtering under system constraints}.
\bjtitle{IMA Journal of Mathematical Control and Information}
\bvolume{8}(\bissue{1}),
\bfpage{93}--\blpage{107}
(\byear{1991})
\end{barticle}
\endbibitem

\bibitem[\protect\citeauthoryear{Ahmed and Charalambous}{2002}]{ahmed2002filtering}
\begin{barticle}
\bauthor{\bsnm{Ahmed}, \binits{N.U.}},
\bauthor{\bsnm{Charalambous}, \binits{C.D.}}:
\batitle{Filtering for linear systems driven by fractional brownian motion}.
\bjtitle{SIAM Journal on Control and Optimization}
\bvolume{41}(\bissue{1}),
\bfpage{313}--\blpage{330}
(\byear{2002})
\end{barticle}
\endbibitem

\end{thebibliography}
\end{document}